\newtheorem{theorem}{Theorem}
\newtheorem{lemma}[theorem]{Lemma}
\newtheorem{proposition}[theorem]{Proposition}
\newtheorem{question}[theorem]{Question}
\newtheorem{corollary}[theorem]{Corollary}
\theoremstyle{remark}
\newtheorem{remark}[theorem]{Remark}
\newtheorem{example}[theorem]{Example}
\numberwithin{equation}{section}
\title{Connected Domination in Plane Triangulations}
\author{Felicity Bryant and Elena Pavelescu}
\begin{document}

\subjclass[2000]{05C10}

\begin{abstract}

A set of vertices of a graph $G$ such that each vertex of $G$ is either in the set or is adjacent to a vertex in the set is called a dominating set of $G$. 
If additionally, the set of vertices induces a connected subgraph of $G$ then the set is a connected dominating set of $G$.  
The domination number $\gamma(G)$ of $G$ is the smallest number of vertices in a dominating set of $G$, and the connected domination number $\gamma_c(G)$ of $G$ is the smallest number of vertices in a connected dominating set of $G$.  
We find the connected domination numbers for all triangulations of up to thirteen vertices. 
For $n\ge 15$, $n\equiv 0$ (mod 3), we find  graphs of order $n$ and $\gamma_c=\frac{n}{3}$.
We also show that the difference $\gamma_c(G)-\gamma(G)$ can be arbitrarily large. 

\end{abstract}
\maketitle

\section{Introduction}

A graph $G$ is a structure defined by a set of vertices $V(G)$ and a set of edges $E(G)$ connecting the vertices. 
Two vertices connected by an edge are said to be \textit{adjacent}. 
A \textit{planar} graph is a graph that can be drawn in the plane in such a way that the edges intersect only at their endpoints. 
A planar graph is maximal, or a \textit{triangulation}, if any added edge creates a graph which is no longer planar. 
For a graph $G$, $\delta(G)$ denotes the minimum among the degrees of all vertices of $G$, and 
$\Delta(G)$ denotes the maximum among the degrees of all vertices.
For a vertex $v\in V(G)$,  $N(v)$ denotes the set of vertices of $G$ that are adjacent to $v$,  and $N\big[v\big]$ denotes $N(v)\cup \{v \} $.

A set of vertices $S\subset V(G)$ such that each vertex of $G$ is either in $S$ or is adjacent to a vertex in $S$ is a \textit{dominating set of $G$}. Additionally, if the induced subgraph $\big< S \big>_G$ is connected, then  $S$ is a \textit{connected dominating set of $G$.} 
The \textit{domination number} $\gamma(G)$ of a graph $G$ is the smallest number of vertices in a dominating
set of $G$. The \textit{connected domination number} $\gamma_c(G)$ of a graph $G$ is the smallest number of vertices in a
connected dominating set of $G$. Only connected graphs are considered in this article. Graphs with more than one connected component do not have connected dominating sets.

While the concept of dominating number was introduced as the \textit{coefficient of external stability} by Berge in \cite{Be}, the terms \textit{dominating set} and \textit{domination number} were coined by Ore in \cite{O}.
The concept of connected domination was first introduced by Sampathkumar and Walikar \cite{SW}.
Garey and Johnson \cite{GJ} showed that the problem of finding a minimum (connected) dominating set is NP-hard. 
This problem remains NP-complete even for planar graphs with maximum degree 3 \cite{GJ}. 
In \cite{K}, Karami et al  study connected domination in graphs and their complements and give upper bounds  for the sum $\gamma_c(G)+\gamma_c(\overline{G})$ and the product $\gamma_c(G)\cdot \gamma_c(\overline{G})$ in terms of the graph order. It is through \cite{K} that the authors were first made aware of connected domination of graphs.

The domination number of a triangulation has been shown to be at most a third of the number of vertices by Matheson and Tarjan \cite{MT}. 
The $n/3$ bound was improved to $5n/16$ by Plummer, Ye, and Zha \cite{PYZ2} for Hamiltonian plane triangulations and to $17n/53$ by Spacapan \cite{S} for all plane triangulations on more than 6 vertices.
Matheson and Tarjan conjectured that the $n/3$ bound could be improved to $n/4$ for sufficiently large order $n$.
This conjecture was confirmed for triangulations of maximal degree 6 by King and Pelsmajer \cite{KP}.
This article is about connected domination in plane triangulations. 
As far as the authors know, no similar results exist about the connected domination number of plane triangulations.
While the definitions imply that $\gamma(G)\le \gamma_c(G)$, an upper limit for the connected domination number in relation to the graph order has not been determined. In this article, we demonstrate that the Matheson and Tarjan conjecture does not extend to connected domination and we conjecture that for all graphs $G$ of order $n$, $\gamma_c(G)\le \frac{n}{3}$. We show this bound is attained by certain graphs of order $n=3k$, $k\ge 5$. 

We give examples of infinite families of triangulations for which the difference $\gamma_c(G)-\gamma(G)$ is arbitrarily large.
We also find connected domination numbers for plane triangulations up to thirteen vertices.

\section{Graphs with vertices of large degree}

In this section it is shown that the existence of a vertex  of large degree in the graph $G$ implies a small connected domination number $\gamma_c(G)$. A well known result is presented in the next Lemma.

\begin{lemma} For $G$ a connected graph with $n$ vertices,  $$\gamma_c(G)\le n- \Delta (G).$$
\label{Delta}
\end{lemma}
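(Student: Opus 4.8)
The plan is to exploit the standard link between connected dominating sets and spanning trees: for any spanning tree $T$ of $G$, the set of \emph{internal} (non-leaf) vertices of $T$ is a connected dominating set of $G$. Indeed, deleting all leaves of a tree leaves a connected subtree, and every leaf of $T$ is adjacent in $T$ to its unique neighbor, which is an internal vertex as long as $T$ is not a single edge. Thus it suffices to produce a spanning tree of $G$ with at least $\Delta(G)$ leaves: the internal vertices then number at most $n-\Delta(G)$ and form a connected dominating set, giving $\gamma_c(G)\le n-\Delta(G)$.

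To build such a tree, I would start from a vertex $v$ with $\deg(v)=\Delta(G)$ and $N(v)=\{u_1,\dots,u_\Delta\}$. The $\Delta$ edges joining $v$ to its neighbors form a star, which is a forest, so by the usual greedy extension of a forest inside a connected graph it completes to a spanning tree $T$ of $G$. By construction $T$ contains every edge incident to $v$, hence $\deg_T(v)=\Delta$. Rooting $T$ at $v$ makes $u_1,\dots,u_\Delta$ the children of $v$, and the subtrees $T_1,\dots,T_\Delta$ hanging from these children are pairwise vertex-disjoint. Each finite subtree $T_i$ contains a vertex with no children, which is a leaf of $T$; since these $\Delta$ leaves lie in disjoint subtrees they are distinct, so $T$ has at least $\Delta$ leaves.

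Combining the two steps yields the bound. The one case to dispatch separately is $\Delta(G)=1$, where $G=K_2$ and $\gamma_c(G)=1=n-\Delta(G)$ directly; for $\Delta(G)\ge 2$ the root $v$ is itself internal, so the set of internal vertices of $T$ is nonempty, connected, and dominating as claimed. I expect the only point needing care to be the leaf count: the argument hinges on forcing $v$ to have full degree $\Delta$ in $T$, so that each of its $\Delta$ subtrees can be charged a private leaf, rather than on any optimality property of $T$. Everything else is routine once that spanning tree is in hand.
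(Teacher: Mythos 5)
Your proposal is correct and follows essentially the same route as the paper: both proofs construct a spanning tree in which the maximum-degree vertex $v$ keeps all $\Delta(G)$ of its incident edges (the paper via a breadth-first tree rooted at $v$, you via greedily extending the star at $v$), observe that such a tree has at least $\Delta(G)$ leaves, and conclude that the non-leaf vertices form a connected dominating set of size at most $n-\Delta(G)$. Your version merely spells out the leaf-counting and the degenerate case $\Delta(G)=1$ that the paper leaves implicit.
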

\vspace{-0.25in}
\begin{proof}
Let $v\in V(G)$ be such that $\deg(v)=\Delta(G)$. Construct a breadth-first spanning tree for $G$ rooted at $v$. This tree has at least $\Delta(G)$ leaves. It follows that $\gamma_c(G)\le n- \Delta (G),$ since the set of non-leaf vertices of the tree form a connected dominating set.   
\end{proof}

\begin{corollary}
Let $G$ be a triangulation with $n\geq 3$ vertices and $\Delta(G)=n-1$. Then $\gamma_c(G) = 1$
\end{corollary}

\begin{corollary}
Let $G$ be a triangulation with $n\geq 6$ vertices and  $\Delta(G)=n-2$. Then $\gamma_c(G) = 2$.
\end{corollary}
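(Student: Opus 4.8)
The plan is to sandwich $\gamma_c(G)$ between $2$ and $2$. The upper bound is immediate from Lemma \ref{Delta}: since $\Delta(G)=n-2$, we get $\gamma_c(G)\le n-\Delta(G)=n-(n-2)=2$. This half of the argument uses nothing beyond the connectivity of $G$, which is assumed throughout the paper.

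For the lower bound I would argue that $\gamma_c(G)\neq 1$. A connected dominating set of size one is a single vertex $v$; the subgraph induced on one vertex is trivially connected, so the only genuine requirement is domination, namely $N[v]=V(G)$. But $N[v]=V(G)$ forces $\deg(v)=n-1$, hence $\Delta(G)\ge n-1$. Since we are given $\Delta(G)=n-2<n-1$, no such universal vertex exists, and therefore $\gamma_c(G)\ge 2$. Combining this with the upper bound yields $\gamma_c(G)=2$.

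There is no deep obstacle here; the entire content is the observation that $\gamma_c(G)=1$ is equivalent to the existence of a vertex of degree $n-1$, which the hypothesis rules out. The triangulation and $n\ge 6$ assumptions enter only to guarantee that the case $\Delta(G)=n-2$ is actually realizable: a triangulation on $n$ vertices has $3n-6$ edges, so the degree sum $2(3n-6)$ is incompatible with $\Delta(G)=n-2$ when $n\le 5$, while for $n=6$ the octahedron (all degrees equal to $4=n-2$) exhibits the situation. Thus the proof is essentially bookkeeping, and the only point worth stating carefully is that a singleton dominating set must be centered at a vertex adjacent to every other vertex.
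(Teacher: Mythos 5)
Your proof is correct and matches the paper's intended argument: the corollary is stated as an immediate consequence of Lemma \ref{Delta}, with the upper bound $\gamma_c(G)\le n-\Delta(G)=2$ and the lower bound coming from the absence of a vertex of degree $n-1$. Your added remark that the hypotheses serve only to make the case $\Delta(G)=n-2$ realizable is accurate but not needed for the proof itself.
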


Lemma \ref{Delta} also implies that if $\Delta(G)=n-4,$ then $\gamma_c(G) \le 4$. For graphs with $10\le n\le 13$ vertices, this upper bound for $\gamma_c(G)$ can be improved.

\begin{lemma}
    Let G be a triangulation with $n\le 13$ vertices  and $\Delta(G) = \deg(v) = n-4$ for some vertex $v\in V(G)$. Then $\gamma_c(G) = 2$ or $3$.
    \label{lemma_n-4}
\end{lemma}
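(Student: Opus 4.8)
The plan is to establish $2 \le \gamma_c(G) \le 3$ by proving the two inequalities separately. For the lower bound, note that $\gamma_c(G) = 1$ would require a vertex adjacent to every other vertex, i.e.\ a vertex of degree $n-1$; since $\Delta(G) = n-4 < n-1$, no such vertex exists, so $\gamma_c(G) \ge 2$. For the upper bound, fix $v$ with $\deg(v) = n-4$. Because $G$ is a triangulation, $N(v)$ induces a cycle $C$ of length $m := n-4 \le 9$ (the link of $v$), and $v$ together with $C$ fills one of the two disks bounded by $C$. Since $|N[v]| = n-3$, exactly three vertices $a,b,c$ lie outside $N[v]$, and these are precisely the interior vertices of the triangulated disk $D$ bounded by $C$ on the side away from $v$. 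Every vertex except $a,b,c$ is dominated by $v$, and for any $w,w' \in C$ the set $\{v,w,w'\}$ is connected (a star centred at $v$); equivalently, the internal vertices of a breadth-first tree rooted at $v$ are exactly $v$ together with the parents of $a,b,c$, as in the proof of Lemma \ref{Delta}. Thus it suffices to show that $a,b,c$ can be dominated by at most two vertices of $C$.

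The argument splits on whether some vertex of $C$ is adjacent to at least two of $a,b,c$. If such a $w \in C$ exists, say $w \sim a$ and $w \sim b$, then the third vertex $c$ still has a neighbour $w' \in C$: indeed $\deg(c) \ge 3$ while $c$ has at most two neighbours among $\{a,b\}$, so $c$ has a boundary neighbour. Then $\{v,w,w'\}$ is a connected dominating set, giving $\gamma_c(G) \le 3$ (and $\gamma_c(G)=2$ exactly when some vertex of $C$ is adjacent to all three of $a,b,c$). The crux is the opposite case, in which \emph{no} vertex of $C$ is adjacent to two of $a,b,c$, and I will show this forces a single extremal configuration at $n=13$. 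First, $a,b,c$ must be pairwise non-adjacent: if $a \sim b$, the interior edge $ab$ lies on two triangles of $D$ whose apexes are distinct common neighbours of $a$ and $b$; these two apexes cannot both equal $c$, so one is a boundary vertex adjacent to both $a$ and $b$, contradicting the case hypothesis. With $a,b,c$ pairwise non-adjacent, each link $N(x)$ is a cycle of boundary vertices only, whence $\deg(x) = |N(x)\cap C| \ge 3$, and the hypothesis makes $N(a)\cap C$, $N(b)\cap C$, $N(c)\cap C$ pairwise disjoint. Therefore $9 \le |N(a)\cap C| + |N(b)\cap C| + |N(c)\cap C| \le m \le 9$, forcing $m=9$, $n=13$, each interior vertex to have exactly three boundary neighbours, and each such triple (being a link) to form a triangle $T_a, T_b, T_c$.

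The remaining task, and the main obstacle, is to produce a connected dominating set of size $3$ in this rigid configuration, where $D$ is a triangulated $9$-gon containing the three vertex-disjoint triangles $T_a, T_b, T_c$ enclosing $a,b,c$. Here the star-at-$v$ strategy provably fails: disjointness shows no two vertices dominate $\{a,b,c\}$, so in fact $\gamma_c(G) \ge 3$, and the claimed value is exactly $3$. To build the set I would instead choose one boundary representative from each of $T_a, T_b, T_c$ so that the three chosen vertices induce a connected subgraph and dominate all nine boundary vertices, exploiting the chords of the triangulated $9$-gon (for instance a representative of one triangle that a pair of chords joins to representatives of the other two). That such a triple always exists is the delicate point; I expect to settle it either by invoking the $n/3$ domination bound for maximal outerplanar graphs together with the freedom to pick the representative in each $T$, or simply by checking it against the finite list of $13$-vertex triangulations. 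This yields $\gamma_c(G)=3$ in the extremal case and completes the proof that $\gamma_c(G) \in \{2,3\}$.
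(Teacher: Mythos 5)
Your argument tracks the paper's proof almost step for step until the very end: the lower bound $\gamma_c(G)\ge 2$, the observation that a vertex of $C$ adjacent to two of the three missing vertices yields a connected dominating set of size at most $3$ (of the form $\{v,w,w'\}$), the two-triangles-on-an-edge argument showing the missing vertices are pairwise non-adjacent in the remaining case, and the pigeonhole count forcing $n=13$ with three pairwise disjoint triangle links partitioning $N(v)$ --- all of this is in the paper, with $u,w,t$ playing the role of your $a,b,c$. Your added remark that $\gamma_c(G)\ge 3$ in the extremal configuration (so that the value there is exactly $3$) is correct and is a small refinement the paper does not state.

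The gap is the last step, and it is genuine: you never prove that a connected triple of representatives, one from each of $T_a$, $T_b$, $T_c$, exists; you only say you ``expect to settle it.'' Note that domination is not the delicate point at all: \emph{any} choice of one vertex per triangle dominates $V(G)$, since each representative dominates its own triangle, its interior vertex, and $v$. The entire difficulty is connectivity, so your first proposed route --- the $n/3$ domination bound for maximal outerplanar graphs --- aims at the wrong property and cannot close the argument, while your second route (checking the list of $13$-vertex triangulations) is an unperformed computation, not a proof. The paper closes exactly this gap with a short structural argument about how the pinned-down subgraph can be completed to a triangulation: the edges still to be added can only join the six ``corner'' vertices of the three triangles, every such edge joins two distinct triangles, and at least one such edge must be present; up to the symmetry of the configuration it may be assumed to be the edge $e$ of Figure \ref{fig:lemma13}, and that edge together with edges already present links one representative of each triangle into a connected set $\{a,b,c\}$. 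Some analysis of the completion is unavoidable here, precisely because the subgraph forced by your counting is not yet a triangulation and the edges needed for connectivity come from its completion; your sentence about ``exploiting the chords of the triangulated $9$-gon'' gestures at this but does not carry it out, and it also overlooks that the completion need not make each triangle an ear of the $9$-gon (one of the three triangles may use two chords of $C$ rather than two cycle edges), so any finishing argument has to cover that configuration as well.
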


\begin{proof}
    Since $deg(v) = n-4$, $v$ is adjacent to all but three of the other vertices of $G$. Let $V(G)\setminus N\big[v\big] =\{u,w,t\}$.  Since there is no vertex of degree $n-1$, $\gamma_c(G)>1$.

    If all $v, u, w,$ and $t$ have a common neighbor $a$, then the set $\{a, v\}$ is a connected dominating set and $\gamma_c(G)=2$. See the figure on the left of Figure \ref{fig:lemma_n-4}.

     If  $v$ and two vertices in $\{u, w, t\}$, say $u$ and $w$, have a common neighbor $a$, then let $b$ be a common neighbor of $v$ and $t$. Such vertex $b$ exists because $\deg(t) \geq 3$. The set $\{a, b, v\}$ is a connected dominating set and $\gamma_c(G)\le 3$. See the figure on the right of Figure \ref{fig:lemma_n-4}.
    
\begin{figure}[h]
\centering
\includegraphics[width=0.8\textwidth]{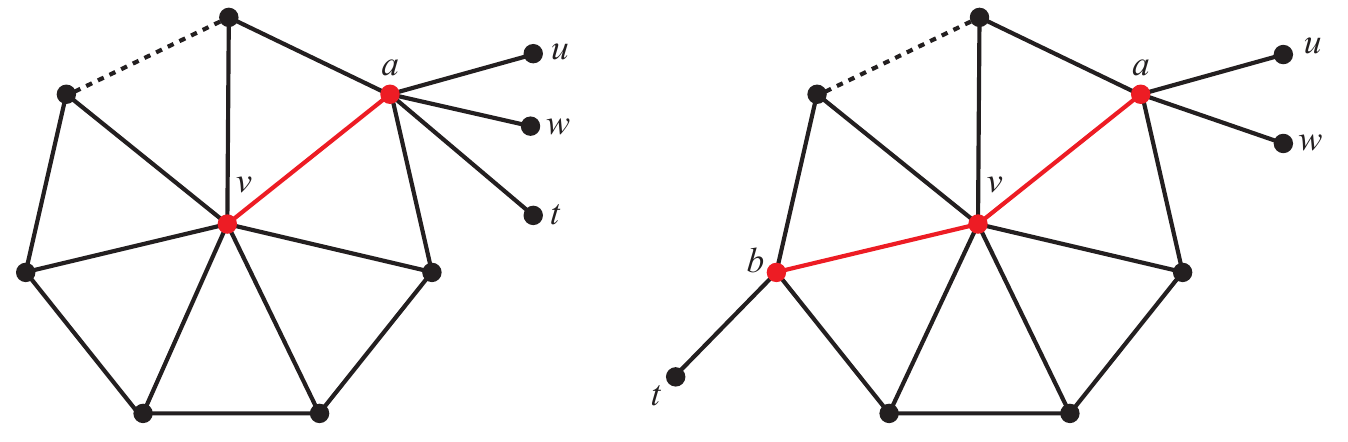}
\caption{\label{fig:lemma_n-4}Left: vertices $v, u, w$, and $t$ have a common neighbor $a$; Right: vertices $v, u, w$ have a common neighbor $a$, vertices $v$ and $t$ have a common neighbor $b$.}
\end{figure}

A common neighbor of $v$ and two vertices in $\{u, w, t\}$ occurs if the induced subgraph $\big<u, w, t\big>_G$ contains at least one edge. This edge, say $uw$, belongs to at least two triangles in $G$. 
One of these triangles can be $uwt$. The other triangle must be $uwa$, where $a$ is a neighbour of $v$. A common neighbor of $v$ and two vertices in $\{u, w, t\}$ also occurs if the induced subgraph $\big<u, w, t\big>_G$ contains no edge and $n \le 12.$ Since  $\deg(u), \deg(w), \deg(t)\ge 3,$ each $u,w,$ and $t$ shares at least three common neighbors with $v$. For $n\le 12$, by the pigeonhole principle, there exists a pair of vertices among $u, w$, and $t$ which share a common neighbor with $v$.

We assume $n=13$,  $\big<u, w, t\big>_G$  has no edges, and no pair of vertices in $\{u, w, t\}$ shares a common neighbor with $v$. Since $\deg(u), \deg(w), \deg(t)\ge 3,$ each $u,w,$ and $t$ shares at least three common neighbors with $v$.  Since $n=13$, each vertex $u, w,$ and $t$ shares exactly three common neighbors with $v$ and these three sets of three vertices are pairwise disjoint. It follows that the graph $G$ contains the graph on the left of Figure \ref{fig:lemma13}.
A triangulation is obtained by adding edges between the six vertices highlighted in blue. Without loss of generality, edge $e$ added on the right of Figure \ref{fig:lemma13} belongs to $G$, and the set $\{a,b,c\}$ is a connected dominating set of $G$. 

\begin{figure}[h]
\centering
\includegraphics[width=0.9\textwidth]{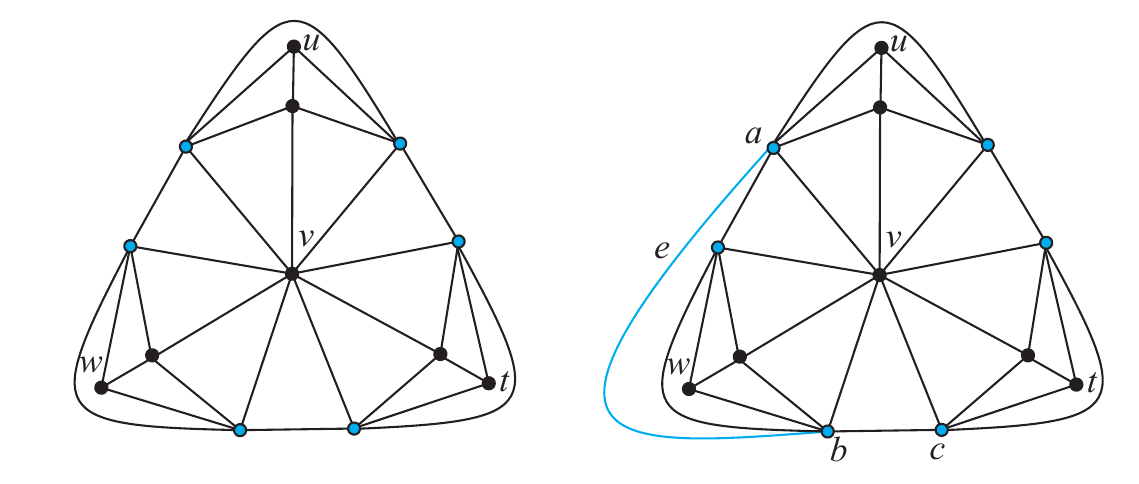}
\caption{\label{fig:lemma13}Left: Subgraph of $G$.  Right: The set $\{a,b,c\}$ is a connected dominating set of $G$ .}
\end{figure}

\end{proof}

\begin{remark}
   The result in Lemma \ref{lemma_n-4} does not hold for $n\ge 14$. The graph $H$ in Figure \ref{fig:remark14} has order 14, maximum degree 10, and $\gamma_c(H)=4$. The set $\{ a, c, e, v\}$ is a connected dominating set of $H$. We see that $\gamma_c(H)> 3$ as follows.
   A connected dominating set contains at least one neighbor of each $u,w,t,$ and $p$. With these constraints, if $\gamma_c(H)=3$, the vertex dominating set contains $e$  and one vertex in each $\{a,b\}$ and $\{c,d\}$.  None of the four choices gives a connected subgraph of $H$.
\end{remark}

\begin{figure}[h]
\centering
\includegraphics[width=0.43\textwidth]{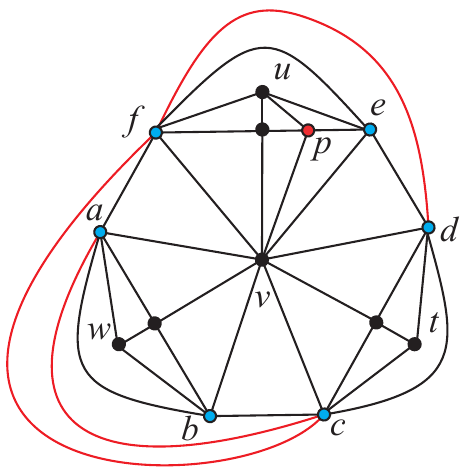}
\caption{\label{fig:remark14} Graph of order 14 with $\gamma_c=4$.}
\end{figure}

\section{Connected domination in triangulatios of up to fifteen vertices}

Plantri \cite{BM}, Nauty Traces \cite{MP} and Mathematica were used to determine connected domination numbers for all graphs up to order 13 and to show that all graph of order 14 have connected domination numbers at most 4. Connected domination numbers were found by hand for all triangulations of up to eight vertices.
For each $9\le n\le 14$, Plantri was used to generate all triangulations of order $n$, then Nauty Traces was used to sort the triangulations into those with maximal degree $n-1$ (these have $\gamma_c=1$ by Lemma \ref{Delta}), those with maximal degree $n-2$ (these have $\gamma_c=2$ by Lemma \ref{Delta}), and those with $3\le \deg(v) \le n-3$ for all vertices $v$ of the graph. 

The connected domination number of each graph in the third category  was checked using a Mathematica program which takes successive edge contractions of a graph.
An edge contraction means identifying the endpoints of the edge to a vertex and deleting all double edges thus created. The new vertex inherits all neighbors of the endpoint vertices in the original graph. For each graph $G$ of order $n$, the program checked if there exists an one-edge contraction which creates a vertex of degree $n-2$. If so, this graph has $\gamma_c=2$, with the endpoints of the edge representing the dominating set of cardinality two.
Else, the program checked if there exist two one-edge contractions which create a vertex of degree $n-3$. The choice of two edges was not restricted to pairs sharing an endpoint. Nevertheless, if a vertex of degree $n-3$ was found, then the graph has $\gamma_c=3$. This is because, as seen in  Lemma \ref{lemma:contraction},  the set of edges that were contracted can be assumed to form a connected subgraph of $G$. The program stopped when a vertex of maximal degree was found through repeated contractions. 

\begin{lemma}
Let $G$ be a graph of order $n$ and let $S$ be a set of $k$ edges of $G$ such that the minor of $G$ obtained by contracting all edges in $S$ contains a vertex of degree $n-k-1.$ Then there exists a set $T$ of $k$ edges of $G$ such that the minor of $G$ obtained by contracting all edges in $T$ contains a vertex of degree $n-k-1$, and the edges in $T$ form a connected subgraph of $G$.
\label{lemma:contraction}
\end{lemma}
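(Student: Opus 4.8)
The plan is to regard the contracted edges as a subgraph $H=(V(H),S)$ of $G$ and, keeping the edge-count fixed at $k$, to rewire $H$ into a connected subgraph while maintaining a vertex of degree $n-k-1$ in the contraction. I would first reduce to the case that $S$ is a forest, so that $G/S$ has exactly $n-k$ vertices; then degree $n-k-1$ is the maximum possible, and the distinguished vertex $x$ is adjacent to every other vertex of $G/S$. In contraction terms, the component (``blob'') $B$ of $H$ that collapses to $x$ is adjacent in $G$ to every other blob and to every vertex incident to no edge of $S$; note this says precisely that the $|B|\le k+1$ vertices of $B$ form a connected set meeting the neighbourhood of every remaining blob. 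If all $k$ edges of $S$ already lie inside $B$, then $S$ is connected and we may take $T=S$. Otherwise $H$ has a further nontrivial component, and I would induct on the number of such components.

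For the inductive step, let $B'$ be a second nontrivial component. Since $B$ is adjacent to $B'$, fix an edge $e\in E(G)$ joining them with endpoint $\nu\in B'$, and pick a leaf $\ell\neq\nu$ of the tree $B'$ together with its incident tree-edge $f$. Setting $T'=(S\setminus\{f\})\cup\{e\}$ leaves the edge-count equal to $k$, keeps $T'$ a forest, merges $B$ and $B'\setminus\{\ell\}$ into one blob, and releases $\ell$ as an isolated vertex, so the number of nontrivial components drops by one.

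The step I expect to be the crux is checking that $T'$ still produces a vertex of degree $n-k-1$, that is, that the enlarged blob is adjacent to every other blob of $G/T'$. This should follow because the enlarged blob contains $B$ and therefore inherits all of $B$'s former adjacencies, while it reaches the newly isolated $\ell$ through $\ell$'s unique former tree-neighbour, which now lies inside the enlarged blob. As $G/T'$ again has $n-k$ vertices, the enlarged blob then has degree $n-k-1$, and the induction hypothesis applies; iterating drives the number of nontrivial components to one, at which point all $k$ edges lie in a single tree and form the required connected set $T$.

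Two points will need care. The first is the reduction to $S$ being a forest: a vertex of degree $n-k-1$ forces the maximum degree only once the minor is known to have exactly $n-k$ vertices, so I would either appeal to the fact that the contractions in question are carried out one edge at a time, each dropping the vertex count by one, or argue separately that a cycle-edge of $S$ plays no role in the minor and can be deleted and its slot reallocated. The second is the availability of the leaf $\ell\neq\nu$: a tree on at least two vertices has at least two leaves, so at most one of them is $\nu$, and the choice is always possible.
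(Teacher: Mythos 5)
Your proof is correct, but it takes a genuinely different route from the paper's. The paper inducts on the number $k$ of contracted edges: it contracts a single edge $a_1b_1$ to form $G'=G/a_1b_1$, applies the induction hypothesis to $G'$ with the remaining $k$ edges to obtain a connected set $T$ there, and then lifts $T$ back to $G$, either adjoining $a_1b_1$ (when the merged vertex $v$ is an endpoint of an edge of $T$) or swapping $a_1b_1$ for an edge $a_1u$ with $u\in V(T)$. You instead argue globally on the forest of contracted edges: you fix the blob $B$ that maps to the universal vertex of $G/S$ and absorb the other nontrivial components into it one at a time by a leaf-edge swap, inducting on the number of components. What your organization buys is an explicit invariant---after every swap the distinguished blob is still adjacent to every other vertex of the minor---and this is exactly the point the paper's lift leaves implicit: its claims that $v$ must neighbour an endpoint of $T$, and that the patched set $T\cup\{a_1u\}$ still yields a vertex of degree $n-k-2$, really rest on the high-degree vertex of $G'/T$ being the blob of $T$ (and on the minor having exactly $(n-1)-k$ vertices), neither of which the bare induction statement asserts. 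Your invariant removes that worry. What the paper's route buys is brevity, and its sequential framing (each contraction drops the vertex count by exactly one) silently keeps the contracted set a forest, which is how it sidesteps the reduction you flag.

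Two repairs would make your write-up complete. First, carry out the forest reduction rather than naming it: replace $S$ by a spanning forest $F$, with $k'\le k$ edges, of the subgraph that $S$ spans (this does not change the minor, so $G/F=G/S$); the vertex $x$ of degree $n-k-1$ then misses exactly $k-k'$ vertices of $G/F$, and since $G$ is connected (the paper's standing assumption) every component of the missed set sends an edge to a neighbour of $x$'s blob (it cannot send one to $x$ itself); contracting such an edge adds one edge to the forest, preserves $\deg(x)$, and shrinks the missed set, so after $k-k'$ steps $x$ is universal in a minor on $n-k$ vertices and your main argument applies. Second, when $x$ is incident to no edge of $S$, your blob $B$ is the single vertex $x$; the swap itself still works, but your induction measure should then be the number of nontrivial components \emph{other than} the one containing $x$, since the first swap makes that component nontrivial without lowering the total count.
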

\begin{proof}
 The proof is by induction on the number $k$ of contracted edges.
 The basis step $k=0$ is immediate.
 Assume the statement holds when $k$ edges are contracted.
 Consider the graph $G$ such that $k+1$ edge contractions of $G$ create a vertex of degree $n-k-2$. 
 Call these edges $a_1b_1$, $a_2b_2, \ldots, a_{k+1}b_{k+1}$. 
 Consider the graph $G'=G/a_1b_1$ obtained from $G$ by contracting the edge $a_1b_1.$
  Call $v$ the vertex of $G'$ obtained from contracting $a_1b_1$. 
 By induction, $G'$ has a set $T$ of $k$ edges such that the minor of $G'$ obtained by contracting all edges in $T$ contains a vertex of degree $n-k-2$, and the edges in $T$ form a connected subgraph of $G'$.
 If $v$ is an endpoint of one of the edges in $T$, then $a_1b_1$ together with the edges in $T$ form the desired set of edges of $G$.
 If $v$ is not an endpoint of any edge in $T$, then $v$ is adjacent to at least one endpoint of an edge in $T$. 
 Call this vertex $u$. 
 This implies either $a_1$ or $b_1$ is adjacent to $u$ in $G$.
 Assume $a_1u\in E(G).$
 Then $T\cup \{a_1u\}$ is the desired set of edges.
\end{proof}

The computer searches gave the values presented in Table \ref{table9-13}. 
For 14 vertices, the number of graphs was too large to handle by computer.  Nevertheless, they all have connected domination at most 4, as shown below.

\begin{proposition}
All triangulations $G$ with 14 vertices have $\gamma_c(G)\le 4$.
\end{proposition}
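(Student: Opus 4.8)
The plan is to bound $\gamma_c(G)$ by analyzing cases according to the maximum degree $\Delta(G)$ of a triangulation $G$ on $14$ vertices. By Lemma~\ref{Delta}, any triangulation with $\Delta(G)\ge 10=n-4$ already satisfies $\gamma_c(G)\le n-\Delta(G)\le 4$, so the entire burden falls on triangulations in which every vertex has degree at most $9$. Combined with the lower bound $\delta(G)\ge 3$ for triangulations, this restricts attention to triangulations on $14$ vertices with $3\le\deg(v)\le 9$ for all $v$, and I would handle these remaining cases directly.

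First I would fix a vertex $v$ of maximum degree $d=\Delta(G)$ with $d\le 9$, and let $v_1,\dots,v_d$ be its neighbors listed in the cyclic order determined by the plane embedding. Because $G$ is a triangulation, consecutive neighbors $v_i,v_{i+1}$ are adjacent, so the neighbors of $v$ form a cycle $C$ (the boundary of the region around $v$), and the set $W=V(G)\setminus N[v]$ of vertices not dominated by $v$ has size $n-1-d=13-d\le 4$ and lies entirely outside $C$. The strategy is to dominate these few outside vertices cheaply: I would augment the single vertex $v$ by adding a short connected path along $C$ together with at most one or two additional neighbors that reach into $W$, producing a connected dominating set of size at most $4$. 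The key structural fact to exploit is that each vertex in $W$ has degree at least $3$ and therefore shares neighbors on the cycle $C$ with other vertices, much as in the proof of Lemma~\ref{lemma_n-4}.

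Concretely, for the borderline value $d=9$ we have $|W|=4$, and I would argue as in Lemma~\ref{lemma_n-4} (now adapted to four outside vertices) that a connected dominating set of the form $\{v\}\cup\{a,b,c\}$, with $a,b,c$ neighbors of $v$ on $C$ chosen to collectively dominate $W$ and to induce a connected subgraph together with $v$, always exists; the remark following Lemma~\ref{lemma_n-4} shows this size $4$ bound is sharp. For smaller $d$ the set $W$ is even smaller and the outer region is correspondingly constrained, so a connected dominating set of size at most $4$ is easier to assemble. Throughout, I would use the fact that any edge of a triangulation lies in exactly two triangular faces to guarantee the existence of the shared neighbors on $C$ needed to stitch the chosen vertices into a connected subgraph.

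The main obstacle I anticipate is the case $d=9$ with $|W|=4$: one must verify that no configuration of the four outside vertices and their adjacencies to $C$ forces a connected dominating set of size $5$. This requires a careful analysis of how the four vertices of $W$ attach to the $9$-cycle $C$, in particular ruling out the possibility that the neighbors needed to dominate $W$ are so spread out around $C$ that no three of them, together with $v$, induce a connected subgraph. I expect this to reduce, after using $\delta(G)\ge 3$ and planarity, to a finite check of how the outside triangulation on $\{v_1,\dots,v_9\}\cup W$ can be completed, analogous to the $n=13$ endgame in Lemma~\ref{lemma_n-4} where the highlighted vertices were joined by a single edge~$e$; the difference $\gamma_c(H)=4$ in the Remark confirms that $4$, not $3$, is the correct bound here.
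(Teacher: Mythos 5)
There is a genuine gap, and it begins with an arithmetic error that inverts the whole plan. For a triangulation on $n=14$ vertices with $\Delta(G)=d\le 9$, the set $W=V(G)\setminus N[v]$ of vertices not dominated by a maximum-degree vertex $v$ has size $13-d\ge 4$, not $\le 4$ as you wrote; moreover $|W|$ \emph{grows} as $d$ shrinks, so your claim that ``for smaller $d$ the set $W$ is even smaller'' and the cases get easier is exactly backwards. A $14$-vertex triangulation can have maximum degree as small as $6$ (the degree sum is $6n-12=72$, average about $5.14$), in which case $|W|=7$: the closed neighborhood of your chosen vertex covers only half the graph, and the strategy of ``$v$ plus three neighbors on the link cycle $C$'' has no reason to produce a dominating set at all. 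These near-regular triangulations are precisely the hard core of the statement --- recall that the icosahedron, with only $12$ vertices and $\Delta=5$, already needs $\gamma_c=4$ --- and your sketch gives no mechanism for them.

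Even in the one case where your count is right ($d=9$, $|W|=4$), the argument is an appeal to analogy rather than a proof: Lemma \ref{lemma_n-4} handles only three outside vertices, its proof already required a delicate endgame at $n=13$, and the Remark after it shows the lemma's conclusion \emph{fails} at $n=14$ (that example has $\Delta=10$, so it is covered by Lemma \ref{Delta}, not by your $d=9$ case --- citing it as confirmation of your bound is misplaced). Ruling out all attachments of four outside vertices to a $9$-cycle is a substantial case analysis you have not done. It is telling that the paper itself does not attempt any of this by hand: its proof uses Lemma \ref{Delta} for $\Delta\ge 10$, then a reduction for $\delta(G)=3$ (delete a degree-$3$ vertex to land on a $13$-vertex triangulation $H$ with $\gamma_c(G)\le\gamma_c(H)+1$, with a computer check of the $153$ exceptional $H$ having $\gamma_c(H)=4$), and finally an exhaustive Mathematica verification of the $2{,}231$ triangulations with $\delta\ge 4$ and $\Delta\le 9$. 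Your proposal would need either to reproduce that computation or to supply a genuinely new structural argument for the low-maximum-degree triangulations; as written it covers neither.
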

 
\begin{proof}
By Lemma \ref{Delta}, if $\Delta(G)\ge 10$, then $\gamma_c(G)\le 4$. 
Consider $G$ a graph with $\delta(G)=3$.
Then $G$ is obtained from a triangulation $H$ with $n=13$ vertices by addition of one vertex within a face of an embedding of $H$ as in the first diagram of Figure \ref{fig:Tmoves}. Adding one vertex can increase $\gamma_c$ by at most 1. This increase occurs when none of the neighbours of the new vertex pertains to a minimal connected dominating set of $H$. If $\gamma_c(H)\le 3$, then $\gamma_c(G)\le 4$.  
There are 153 graphs $H$ with 13 vertices and $\gamma_c(H)=4$. It is possible that, by adding a vertex of degree three to one of these graphs, one can obtain a graph $G$ with $\gamma_c(G)=5$. To check  if this is the case, using Nauty, a vertex of degree 3 was added every which way to the 153 graphs. Up to isomorphism, 2,445 triangulations were found this way. Using Mathematica, each of these graphs was shown to have $\gamma_c(G)=4.$

With the considerations above, only the triangulations $G$ with $\delta(G)\ge 4$ and $\Delta(G)\le 9.$ need to be checked. There are 2,231 such graph. Using Mathematica, each of these graphs was shown to have $\gamma_c(G)\le 4.$
\end{proof}

\begin{proposition}
All triangulations $G$ with 15 vertices have $\gamma_c(G)\le 5.$
\end{proposition}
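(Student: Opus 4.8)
The plan is to mirror the structure of the proof for $14$-vertex triangulations, partitioning all triangulations on $n=15$ vertices according to their extreme degrees. By Lemma \ref{Delta}, any triangulation with $\Delta(G)\ge 10$ satisfies $\gamma_c(G)\le 15-10=5$, so these graphs require no further attention. Since every triangulation has $\delta(G)\ge 3$, it then remains to treat the graphs with $\Delta(G)\le 9$, and these split into the subcases $\delta(G)=3$ and $\delta(G)\ge 4$.

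For the case $\delta(G)=3$, I would reuse the reduction from the preceding proposition. A vertex of degree $3$ in a triangulation has a triangle of neighbors, so deleting it yields a triangulation $H$ on $14$ vertices, and $G$ is recovered from $H$ by inserting one vertex into a triangular face. Adding a single vertex raises the connected domination number by at most $1$: if $D$ is a minimum connected dominating set of $H$ and the inserted vertex $x$ has a neighbor already in $D$, then $D$ itself is a connected dominating set of $G$; otherwise $D$ dominates the neighbors of $x$, so for any neighbor $a$ of $x$ the set $D\cup\{a\}$ is connected and dominates $G$. Combining this with the preceding proposition, which gives $\gamma_c(H)\le 4$ for every $14$-vertex triangulation $H$, yields $\gamma_c(G)\le 5$ immediately. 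Notably, unlike the $14$-vertex argument, no separate examination of the graphs with $\gamma_c(H)=4$ is needed here, since the target bound is exactly $\gamma_c(H)+1$.

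The remaining case, $\delta(G)\ge 4$ together with $\Delta(G)\le 9$, I would settle by a finite computer search in the spirit of the previous section. Using Plantri I would generate precisely the $15$-vertex triangulations with minimum degree at least $4$, discard those containing a vertex of degree $10$ or more, and then test each surviving graph in Mathematica with the successive-edge-contraction routine justified by Lemma \ref{lemma:contraction}, checking whether at most four contractions whose edges form a connected subgraph produce a vertex of degree $n-5=10$; such a configuration certifies $\gamma_c(G)\le 5$, with the endpoints of the contracted edges forming the connected dominating set.

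The main obstacle is computational rather than conceptual: there are on the order of two million triangulations on $15$ vertices, so the feasibility of the last step rests entirely on the degree restrictions $4\le\delta(G)$ and $\Delta(G)\le 9$ cutting the enumeration down to a list small enough to be processed graph by graph. The conceptual content, namely the degree partition and the at-most-one increase of $\gamma_c$ under a single vertex insertion, is identical to the $14$-vertex case; the genuine risk lies in verifying that this filtered family is both exhaustive and tractable.
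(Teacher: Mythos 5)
Your proposal is correct in outline, but it takes a genuinely different and heavier route than the paper. The paper's entire proof is deductive: by the generation theorem of Bowen and Fisk \cite{BF}, every triangulation on $15$ vertices is obtained from a triangulation on $14$ vertices by inserting a single vertex of degree $3$, $4$, or $5$ (Figure \ref{fig:Tmoves}), and each of these three insertions raises $\gamma_c$ by at most $1$ (the inserted vertex is adjacent to both endpoints of any edge the move deletes, so adjoining it to a minimum connected dominating set of the smaller graph restores both domination and connectivity). Combined with the $14$-vertex proposition, this gives $\gamma_c(G)\le 5$ with no case analysis and no computation. Your decomposition handles $\Delta(G)\ge 10$ via Lemma \ref{Delta} and $\delta(G)=3$ via vertex deletion, which is exactly the degree-$3$ special case of the Bowen--Fisk reduction; but because you only delete degree-$3$ vertices, you are left with the middle range $\delta(G)\ge 4$, $\Delta(G)\le 9$, which you defer to a fresh computer enumeration justified by Lemma \ref{lemma:contraction}. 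That search is plausibly feasible (plantri can generate $15$-vertex triangulations of minimum degree at least $4$ directly, a far smaller class than all $2{,}406{,}841$ triangulations), so your proof would likely go through, but it rests on an unexecuted computation whose outcome you can only assert. The single idea you are missing is that vertices of degree $4$ and $5$ can also be deleted, with the resulting quadrilateral or pentagonal hole retriangulated --- that is precisely the content of \cite{BF}, and it is what allows the paper to dispense with the computer entirely at $n=15$.
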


\begin{proof}
By work of Bowen and Fisk \cite{BF}, a  triangulation with $n$ vertices can be obtained from a triangulation with $n-1$ vertices by adding one vertex of degree 3, 4, or 5, as described in Figure \ref{fig:Tmoves}. If $G'$ is obtained from $G$ through either of the three moves, then $\gamma_c(G')\le \gamma_c(G)+1.$ 
A triangulation with $15$ vertices is constructed from a triangulation with $14$ vertices by addition of one vertex. 
Since $\gamma_c(G)\le 4$ for all triagulations $G$ with $14$ vertices, then $\gamma_c(G')\le 5$ for all triagulations $G'$ with $15$ vertices. 
\end{proof}
\begin{figure}[h]
\centering
\includegraphics[width=1\textwidth]{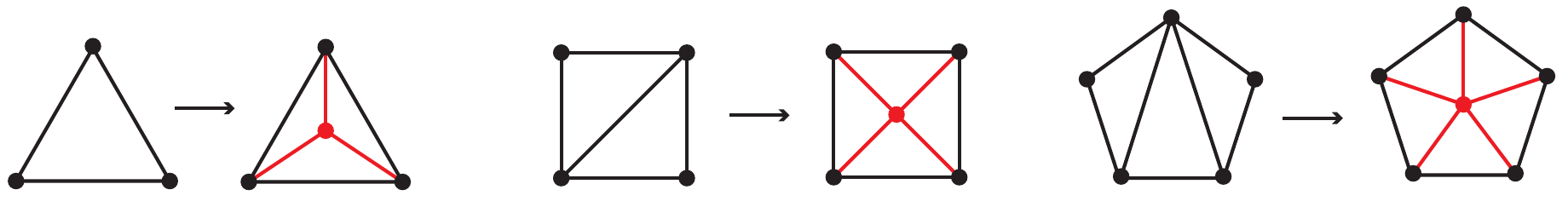}
\caption{\label{fig:Tmoves} Constructing larger triangulations from smaller ones. }
\end{figure}

\begin{remark}
We see from Table \ref{table9-13} that for each $9\le n\le 13$, for a graph $G$ of order $n$, $\gamma_c(G)\le \lfloor \frac{n}{3}\rfloor$. We also see that the conjecture of  Matheson and Tarjan \cite{MT} that $\gamma(G)\le \lfloor \frac{n}{4}\rfloor$ for triangulations of order $n$ does not extend to connected domination.
\end{remark}

\begin{table}
\centering
\begin{tabular}{|c|c|c|c|c|c|c|}
\hline
    $n$     & \#T&  $\gamma_c$=1  & $\gamma_c$=2  & $\gamma_c$=3 & $\gamma_c$=4 & $\gamma_c$=5 \\ \hline 
        $5$     & 1 & 1  & 0 & 0 & 0 & 0 \\ \hline
    $6$     & 2 &  1 & 1 & 0 & 0 & 0\\ \hline
     $7$     & 5 & 3 & 2 & 0 & 0 & 0\\ \hline
     $8$     & 14 & 3  & 11 & 0 & 0 & 0\\ \hline
    $9$     & 50 &  12 & 37 & 1 & 0 & 0\\ \hline
    $10$     & 233 & 27  & 193 & 13 & 0 & 0\\ \hline
     $11$     & 1,249 & 82  & 995  & 172 & 0 & 0\\ \hline
        $12$     & 7,595 & 226  & 5,191 & 2,173 & 5 & 0\\ \hline
        $13$     & 49,566 & 733  & 25,760 & 22,920 & 153 & 0 \\ \hline
    $14$    & 339,722 & 2,282 & - & - & - & 0\\ \hline
    $15$    & 2,406,841 & 7,528 & - & - & - & $>0$\\ \hline
\end{tabular}\vspace{0.1in}
\caption{\small Connected domination numbers for graphs of order up to 15. For $n\le 14$ no graph has $\gamma_c >4$.}
\label{table9-13}        
\end{table} 
\section{Two families of graphs}
Though their number is yet unknown, there exist graphs of order 15 with $\gamma_c=5$. They correspond to the ``$>0$" entry in Table \ref{table9-13}. These examples are presented in this section together with two families of graphs on $n$ vertices with $\gamma_c=\frac{n}{3}$ for $n\equiv 0$ (mod 3), $n\ge 15$. These examples were obtained using the construction in this next Lemma.
\begin{lemma}
Consider a triangulation $G$ and a triangle $abc$ of $G$ which bounds a face in a planar embedding of $G$.  Consider the triangulation $G'$ obtained from $G$ by adding a triangle $efg$ together with edges $ae, af, bf, bg, ce,$ and $cg$. 
(1) If only one of $a, b$, or $c$ is part of a minimal connected dominating set of $G$, then $\gamma_c(G')=\gamma_c(G)+1.$
(2) If none of $a, b$, or $c$ is included in a minimal connected dominating set of $G$, then $\gamma_c(G')=\gamma_c(G)+2.$
\label{lemma:twist}
\end{lemma}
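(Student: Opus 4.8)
The plan is to exploit the fact that the six vertices $\{a,b,c,e,f,g\}$ induce an octahedron in $G'$: the only non-edges among them are $ag$, $be$, and $cf$, so each of $a,b,c$ is adjacent to exactly two of $e,f,g$ (namely $a\sim e,f$; $b\sim f,g$; $c\sim e,g$), any two of $a,b,c$ together dominate all of $e,f,g$, and no single one of $a,b,c$ dominates all three. Crucially, $e,f,g$ are joined to the rest of $G'$ only through $a,b,c$, and apart from the new edges incident to $\{a,b,c,e,f,g\}$ the adjacencies of $G$ and $G'$ coincide. I will use these facts repeatedly.

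For the upper bounds I extend a minimum connected dominating set $D$ of $G$. In case (1), say $a\in D$ is the one vertex of $\{a,b,c\}$ in $D$; then $a$ already dominates $e$ and $f$, and adding the single vertex $f$ (a neighbour of $a$, so connectivity is preserved) dominates the remaining vertex $g$. Hence $D\cup\{f\}$ is a connected dominating set of $G'$ and $\gamma_c(G')\le\gamma_c(G)+1$. In case (2), none of $a,b,c$ lies in $D$, so each of them is dominated in $G$ by a neighbour in $D$; adding any two of $a,b,c$ dominates $e,f,g$, these two are adjacent to each other and each attaches to the connected set $D$ through such a neighbour, giving a connected dominating set of size $\gamma_c(G)+2$. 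Thus $\gamma_c(G')\le\gamma_c(G)+2$.

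The heart of the argument is the matching lower bound, obtained by projecting a minimum connected dominating set $D'$ of $G'$ back to $G$. Write $S=D'\cap\{e,f,g\}$ and $D_0=D'\setminus S$. First I show $D_0$ is connected: since $e,f,g$ attach to $D_0$ only at vertices of $\{a,b,c\}\cap D_0$, and $\{a,b,c\}$ is a clique, all these attachment vertices lie in a single component of $D_0$, so $S$ cannot join two components of $D_0$; as $D'$ is connected, $D_0$ must already be connected. Next, every vertex of $V(G)\setminus\{a,b,c\}$ has the same closed neighbourhood in $G$ and $G'$, so $D_0$ dominates $V(G)\setminus\{a,b,c\}$. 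Moreover, if $S\neq\emptyset$ then connectivity forces $D_0\cap\{a,b,c\}\neq\emptyset$ (the vertices of $S$ have no other neighbours in $D_0$), and then each of $a,b,c$ is dominated by an adjacent vertex of $\{a,b,c\}\cap D_0$; hence $D_0$ is a connected dominating set of $G$ with $|D_0|=|D'|-|S|\le|D'|-1$. If instead $S=\emptyset$, then $D'\subseteq V(G)$ is itself a connected dominating set of $G$, and dominating $e,f,g$ from $V(G)$ alone forces $|D'\cap\{a,b,c\}|\ge 2$. In case (1) this already yields the bound: the $S\neq\emptyset$ branch gives $\gamma_c(G)\le|D'|-1$, while in the $S=\emptyset$ branch a set of size $\gamma_c(G)$ would be a minimum connected dominating set of $G$ containing two of $a,b,c$, contradicting the hypothesis, so again $|D'|\ge\gamma_c(G)+1$. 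Combined with the upper bound this proves (1).

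The remaining obstacle, and the delicate point of the lemma, is the lower bound in case (2). The branch $S\neq\emptyset$ is again fine: it produces a connected dominating set $D_0$ of $G$ of size $\le|D'|-1$ meeting $\{a,b,c\}$, so if $|D'|=\gamma_c(G)+1$ then $D_0$ would be a minimum connected dominating set of $G$ meeting $\{a,b,c\}$, contradicting the hypothesis. The difficulty is the branch $S=\emptyset$, which only gives $|D'\cap\{a,b,c\}|\ge 2$ and hence $|D'|\ge\gamma_c(G)+1$ for free; to reach $\gamma_c(G')\ge\gamma_c(G)+2$ I must rule out a connected dominating set of $G$ of size exactly $\gamma_c(G)+1$ that uses two of $a,b,c$, since such a set would already dominate $e,f,g$ and certify $\gamma_c(G')\le\gamma_c(G)+1$. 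Closing this case is where the hypothesis that none of $a,b,c$ lies in a minimum connected dominating set must be used in its strong form---equivalently, that every connected dominating set meeting $\{a,b,c\}$ in two vertices already has size at least $\gamma_c(G)+2$. I expect to derive this either directly from the hypothesis as intended in the paper's constructions, or via an exchange argument that deletes a vertex of such a size-$(\gamma_c(G)+1)$ set to recover a minimum connected dominating set of $G$ still meeting $\{a,b,c\}$; guaranteeing that the deletion preserves domination and connectivity is the main technical hurdle, since everything else follows cleanly from the octahedron structure and the projection $D'\mapsto D_0$.
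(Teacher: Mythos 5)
Your upper bounds, your case (1), and your $S\neq\emptyset$ branch of case (2) are all correct, and your projection argument (passing from a minimum connected dominating set $D'$ of $G'$ to $D_0=D'\setminus\{e,f,g\}$, using that $e,f,g$ attach to the rest of $G'$ only through the clique $\{a,b,c\}$) is considerably more rigorous than the paper's own proof, which consists of the single observation that $e,f,g$ have no common neighbour --- so at least two vertices of $\{a,b,c,e,f,g\}$ are needed --- followed by an assertion of both conclusions. The gap you flag in case (2), namely ruling out a connected dominating set of $G$ of size $\gamma_c(G)+1$ that contains two of $a,b,c$ (which would dominate $e,f,g$ for free and force $\gamma_c(G')\le\gamma_c(G)+1$), is precisely the step the paper skips as well: its counting tacitly assumes the two vertices that dominate $\{e,f,g\}$ sit on top of a minimum connected dominating set of $G$, ignoring that two of $a,b,c$ can do double duty.

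This gap cannot be closed, because statement (2) is false as written (reading ``minimal'' as minimum cardinality, as both you and the paper do). Let $G$ be the $7$-vertex triangulation consisting of a hexagon $r_1r_2\ldots r_6$, a hub $v$ adjacent to all six $r_i$, and chords $r_1r_3$, $r_3r_5$, $r_5r_1$ drawn outside the hexagon, so that $r_1r_3r_5$ bounds a face. Here $v$ is the unique vertex of degree $n-1=6$, so $\{v\}$ is the unique minimum connected dominating set and the hypothesis of (2) holds for the face $abc=r_1r_3r_5$. But $N[r_1]\cup N[r_3]=V(G)$, and after gluing (with $a=r_1$, $b=r_3$, $c=r_5$) each of $e,f,g$ is adjacent to $r_1$ or $r_3$; hence $\{r_1,r_3\}$ is a connected dominating set of $G'$ and $\gamma_c(G')=2=\gamma_c(G)+1$, not $\gamma_c(G)+2$. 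This is exactly your $S=\emptyset$, $|D'|=\gamma_c(G)+1$ scenario realized. So no exchange argument can succeed; the lemma needs the stronger hypothesis that, in addition, no connected dominating set of $G$ of size $\gamma_c(G)+1$ contains two of $a,b,c$ (a condition the paper's particular starting graphs happen to satisfy). With that strengthening, your proof is complete as it stands.
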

\begin{proof}
 Since vertices $e$, $f$, and $g$ have no common neighbor in $G'$, a minimal connected dominating set of $G'$ contains at least two vertices which neighbor either $e$, $f$, or $g$. See Figure \ref{fig:twist3}. These two vertices belong to the set $\{a,b,c,e,f,g\}$. If only one of $a, b$, and $c$ is part of a minimal connected dominating set of $G$, then a minimal connected dominating set of $G'$ contains one more element than a minimal connected dominating set of $G$, and $\gamma_c(G')=\gamma_c(G)+1.$ If none of $a, b$, and $c$ is included in a  minimal connected dominating  set of $G$, then a minimal connected dominating set of $G'$ contains two more elements than a minimal connected dominating set of $G$, and $\gamma_c(G')=\gamma_c(G)+2.$
\end{proof}

\begin{figure}[ht]
\centering
\includegraphics[width=0.4\textwidth]{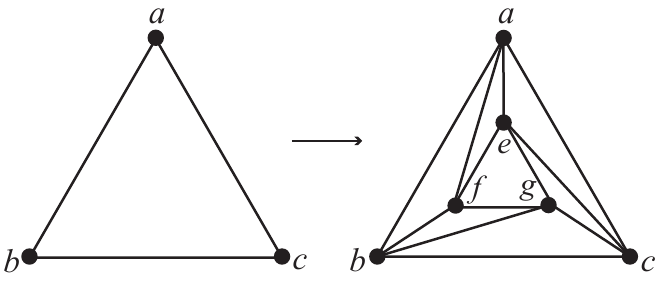}
\caption{\label{fig:twist3} Triangulation of order $n+3$ obtained from a triangulation of order $n$.}
\end{figure}

\newpage

\begin{remark} The construction in Lemma \ref{lemma:twist} can be thought of as a clique sum over the complete graph $K_3$ between the triangulation $G$ and the 4-regular triangulation of the plane with six vertices.
    \end{remark}

\begin{proposition}
There exists families $\{H_{3k}\}_{k\ge 5}$ of triangulations on $n=3k$ vertices, such that  $\gamma_c(H_{3k})= k = \frac{n}{3}$.
\label{prop:3kexamples}
\end{proposition}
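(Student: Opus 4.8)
The plan is to realize each family by iterating the twist construction of Lemma~\ref{lemma:twist}, starting from a base triangulation on $15$ vertices with $\gamma_c=5$, and to control the outcome of every twist through a self-propagating invariant. The invariant I would carry is the following: at each stage the triangulation $H_{3k}$ comes equipped with a facial triangle $a_kb_kc_k$ and a minimum connected dominating set $D_k$ such that $D_k\cap\{a_k,b_k,c_k\}$ is a single vertex, say $a_k$. Whenever this holds, the face $a_kb_kc_k$ satisfies the hypothesis of Case~(1) of Lemma~\ref{lemma:twist}, so twisting on it yields $H_{3(k+1)}$ with $\gamma_c(H_{3(k+1)})=\gamma_c(H_{3k})+1$; the whole statement then follows by induction, provided the invariant is restored after each twist.

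For the base case I would take two specific triangulations on $15$ vertices with $\gamma_c=5$ among those accounting for the ``$>0$'' entry of Table~\ref{table9-13}; these give the two promised families. For each I must exhibit a facial triangle and a minimum connected dominating set witnessing the invariant at $k=5$. Since these graphs are themselves produced by the twist construction, I expect the witnessing face to be the innermost triangle created by the final twist, where the structure is transparent.

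Next comes the inductive step, where the invariant propagates. Label the triangle $efg$ added by the twist on $a_kb_kc_k$ as in Lemma~\ref{lemma:twist}, so that the distinguished vertex $a_k$ is adjacent to $e$ and $f$ but not to $g$; thus $a_k$ dominates $e$ and $f$, and among the new vertices only $g$ is left undominated. The set $D_{k+1}:=D_k\cup\{e\}$ is connected because $e\sim a_k\in D_k$, and dominating because $e\sim g$; as $|D_{k+1}|=k+1=\gamma_c(H_{3(k+1)})$, it is a minimum connected dominating set. Taking the next distinguished face to be the inner triangle $efg$ and noting that $D_{k+1}\cap\{e,f,g\}=\{e\}$, the invariant is restored with $a_{k+1}:=e$. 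The same step applies verbatim to both families.

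I expect the main obstacle to be the base case rather than the induction: certifying that a triangulation on $15$ vertices with $\gamma_c=5$ exists and admits a facial triangle meeting a minimum connected dominating set in exactly one vertex rests on the explicit $n=15$ examples, not on the general machinery. Once these are in hand the self-propagating invariant makes the rest routine, since Lemma~\ref{lemma:twist} already supplies the exact equality $\gamma_c(H_{3(k+1)})=\gamma_c(H_{3k})+1$ (in particular the lower bound, ruling out any smaller connected dominating set), and the inner triangle $efg$ is visibly a face of $H_{3(k+1)}$, so the next twist is legitimate. I would also remark that the identical induction started from the octahedron $H_6$ (with $\gamma_c=2$) would yield $\gamma_c(H_{3k})=k$ for all $k\ge2$; the restriction to $k\ge5$ is natural only because the cases $k\le4$ are already recorded in Table~\ref{table9-13}.
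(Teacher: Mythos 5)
Your overall strategy---iterate Lemma~\ref{lemma:twist}, carrying an invariant that re-triggers case~(1) at each step---mirrors the paper's, but the invariant you chose is too weak, and this breaks the inductive step. Case~(1) of Lemma~\ref{lemma:twist} cannot be read as ``\emph{some} minimum connected dominating set meets $\{a,b,c\}$ in exactly one vertex''; under that reading its conclusion is false. Indeed, suppose that besides your $D_k$ the graph $H_{3k}$ has \emph{another} minimum connected dominating set $D'$ containing two of the face vertices, say $a_k$ and $b_k$. After the twist, $a_k$ dominates $e$ and $f$, and $b_k$ dominates $f$ and $g$, so $D'$ is still a connected dominating set of $H_{3(k+1)}$, giving $\gamma_c(H_{3(k+1)})\le\gamma_c(H_{3k})$: the lower bound, which is the entire content of the proposition, fails. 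The hypothesis the lemma actually requires---and the invariant the paper carries---is universally quantified: \emph{no} minimum connected dominating set contains two or more vertices of the distinguished face. Your propagation step $D_{k+1}=D_k\cup\{e\}$ establishes only the existential statement, hence only the upper bound $\gamma_c(H_{3(k+1)})\le k+1$; it does nothing to rule out other minimum connected dominating sets of $H_{3(k+1)}$ containing two of $e,f,g$. That is the real crux, and it is what the paper handles by verifying the universal property on its explicit $9$-, $12$-, and $15$-vertex graphs before iterating.

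Your closing remark makes the gap concrete: it is false that running your induction from the octahedron $H_6$ gives $\gamma_c(H_9)=3$. Write the octahedron with face $abc$ and antipodes $\bar a,\bar b,\bar c$ (each vertex adjacent to all others except its antipode). The edge $\{a,\bar b\}$ is a minimum connected dominating set meeting $\{a,b,c\}$ in exactly $\{a\}$, so your invariant holds; but $\{a,b\}$ is \emph{also} a minimum connected dominating set, and after twisting on $abc$ it still dominates everything ($a$ covers $e,f$ and all old vertices but $\bar a$; $b$ covers $g$ and $\bar a$), so the resulting $9$-vertex triangulation has $\gamma_c=2$, not $3$. (Consistently, Table~\ref{table9-13} records exactly one triangulation of order $9$ with $\gamma_c=3$, and it is not this one.) So the inductive step as stated is unsound; repairing it requires strengthening the invariant to the universal one and proving that \emph{it} propagates, which is where the genuine work lies. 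A further, smaller, issue is the base case: the ``$>0$'' entry of Table~\ref{table9-13} is itself justified by the very construction of Proposition~\ref{prop:3kexamples} (the paper builds its $15$-vertex examples up from explicit $9$- and $12$-vertex graphs), so citing that entry is circular; the base must rest on explicit graphs whose minimum connected dominating sets are analyzed directly.
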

\begin{proof}
Consider the triangulation on nine vertices on the left of Figure \ref{fig:3k}. This graph has $\gamma_c=2$ and the two highlighted vertices are a dominating set. No vertex of the triangle bounding the unbounded face of the embedding is contained in a minimal connected dominating set. With Lemma \ref{lemma:twist}, it follows that the graph in the middle of Figure \ref{fig:3k} has $\gamma_c=4.$
A minimal connected dominating set is highlighted.
No minimal connected domination set of this graph with 12 vertices contains two vertices of the triangle bounding the unbounded face.
\begin{figure}[ht]
\centering
\includegraphics[width=1\textwidth]{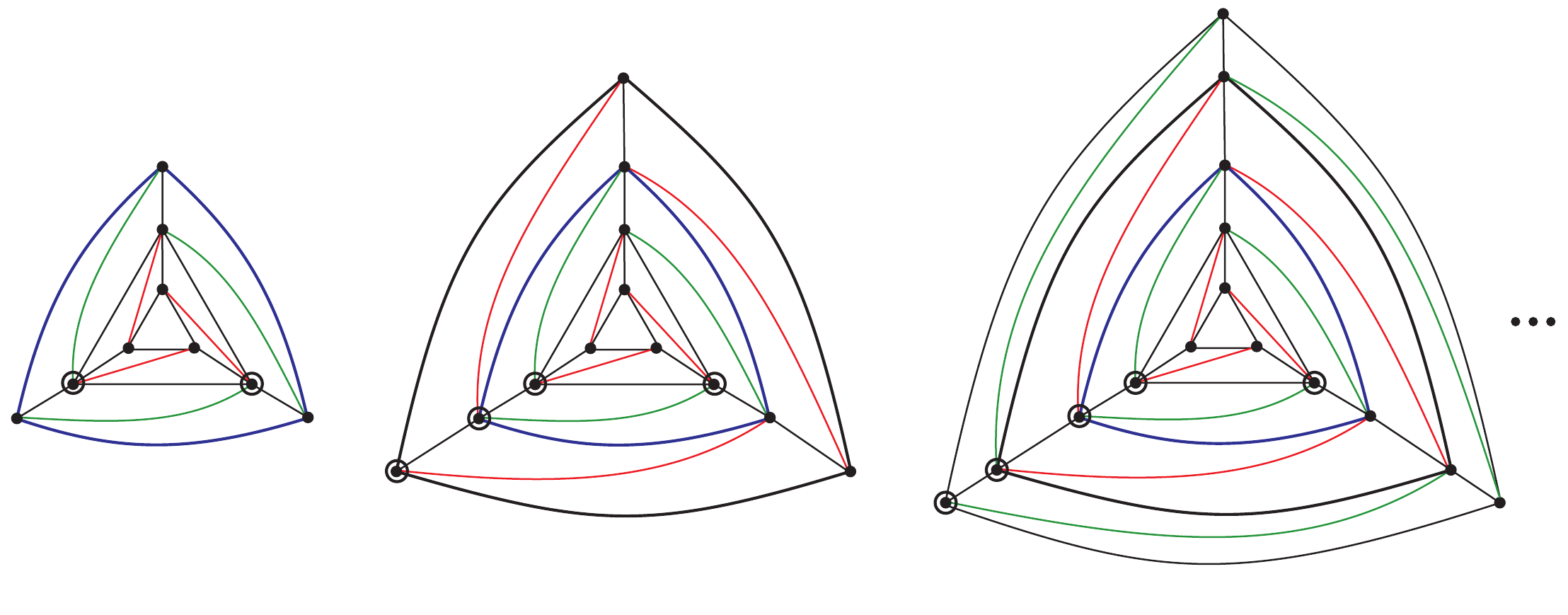}
\put(-400,0){$n=9, \gamma_c=2$ \hspace{0.6in} $n=12, \gamma_c=4$  \hspace{1.1in} $n=15, \gamma_c=5$}
\caption{\label{fig:3k} Repeated clique sums over $K_3$ yield graphs of order $3k$ and $\gamma_c=k,$ $k\ge 5.$}
\end{figure}
By Lemma \ref{lemma:twist}, the graph on the right of Figure \ref{fig:3k} has $\gamma_c\ge 5$. This is the graph $H_{15}$. A connected dominating set is highlighted, thus $\gamma_c(H_{15})=5.$ As above, no two vertices of the triangle bounding the unbounded face of $H_{15}$ belong to a minimal connected dominating set, and the construction can be iterated indefinitely to create $H_{3k}$, $k\ge 5$.

A second family of graphs can be obtained by starting with the graph on the left of Figure \ref{fig:3k2} and repeatedly performing clique sums over $K_3$ as in Lemma \ref{lemma:twist}. 
\begin{figure}[ht]
\centering
\includegraphics[width=1\textwidth]{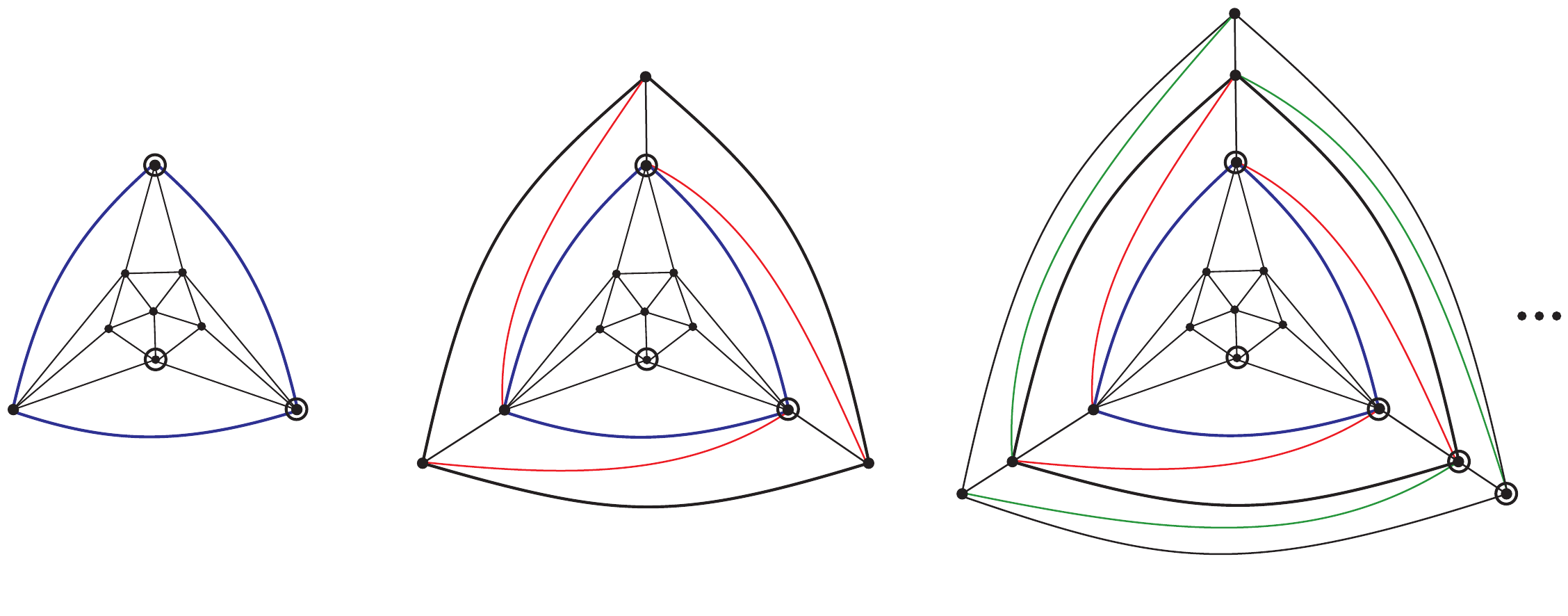}
\put(-400,0){$n=9, \gamma_c=3$ \hspace{0.7in} $n=12, \gamma_c=3$  \hspace{1.1in} $n=15, \gamma_c=5$}

\caption{\label{fig:3k2} Repeated clique sums over $K_3$ yield a second family of graphs of order $3k$ and $\gamma_c=k$, $k\ge 5.$}
\end{figure}
\end{proof}

Based on all our findings above we put forth the following question.

\begin{question}
Is it true that $\gamma_c(G)\le \frac{n}{3}$ for all   triangulations with $n\ge 3$ vertices?
\end{question}

\section{The difference $\gamma_c(G)-\gamma(G)$}

\begin{example}
    Among all triangulations with at most nine vertices, only one has its connected domination number different than its domination number. This same graph is the only one with $\gamma_c\ge 3$ among all the graphs of order at most nine. This is the graph $G_9$ in Figure \ref{fig:example9}.  For the graph $G_9$, $\gamma(G)=2$, given by the vertex set $\{u,v\}$, and $\gamma_c(G_9)=3$, given by the vertex set  $\{u,v, w\}$. This graph gives rise to the second family of graphs presented in Proposition \ref{prop:3kexamples}.
    \label{example9}
\end{example}

\begin{figure}[ht]
\centering
\includegraphics[width=0.33\textwidth]{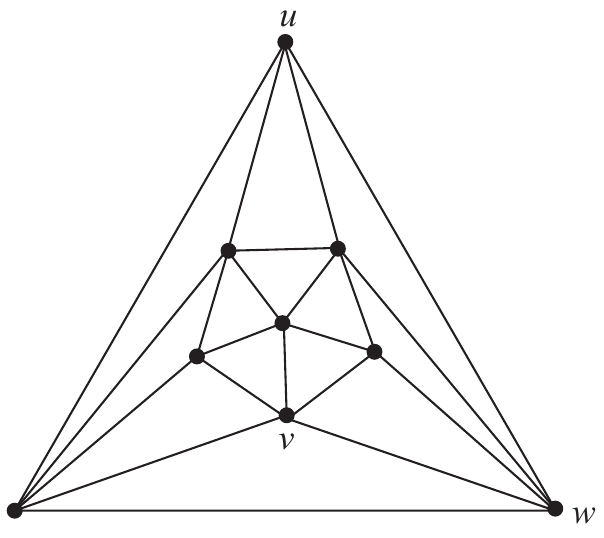}
\caption{\label{fig:example9} Triangulation $G_9$ of order nine with $\gamma_c=3$ and $\gamma=2$. }

\end{figure}

\begin{example}
    Among all triangulations with $n\le 12$ vertices, only two triangulations $G$ are such that $\gamma_c(G)>\gamma(G)+1$. These are the two graphs presented in Figure \ref{fig:example_icos}. The graph on the left is the icosahedron graph.
Both these graphs have $\gamma=2$, given by the set of vertices $\{a,u\}$ and $\gamma_c=4$. 
One can check that no connnected set of three vertices dominates the graph.
For each example, there are many possible choices of sets of four vertices which give $\gamma_c=4$.
One such choice, together with the connected subgraph they induce is highlighted in red.
\label{example12}
\end{example}
\begin{figure}[h]
\centering
\includegraphics[width=0.8\textwidth]{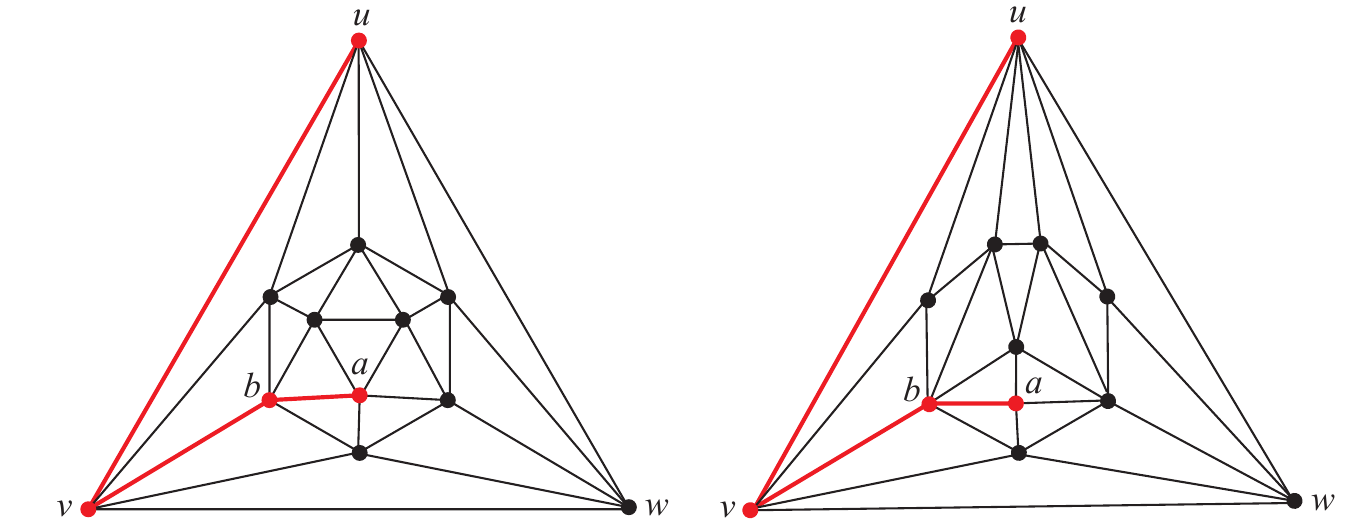}
\caption{\label{fig:example_icos} Triangulations  of order 12 with $\gamma_c=4$ and $\gamma=2$. }
\end{figure}

Examples \ref{example9} and \ref{example12} indicate the possibility that $\gamma_c-\gamma$ can become large for graphs of large order. Indeed, there are families of graphs for which $\gamma_c-\gamma$ grows infinitely large.
\begin{proposition}
There exists a family of triangulations of the plane $\{H_k\}_k$ for which $\gamma_c(H_k)-\gamma(H_k)$ grows arbitrarily large. 
\end{proposition}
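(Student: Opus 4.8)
The plan is to build $H_k$ by starting from a fixed base triangulation $B$ and attaching $k$ copies of the six-vertex gadget of Lemma~\ref{lemma:twist} by successive clique sums over $K_3$, arranging each attachment so that it falls under \emph{case (2)} of that lemma. The whole point is that one such clique sum charges $+2$ to the connected domination number but only $+1$ to the ordinary one: the three apex vertices $e,f,g$ of each gadget form a triangle with no common neighbor, so any connected dominating set must spend two vertices near them (Lemma~\ref{lemma:twist}), whereas an \emph{arbitrary} dominating set may dominate all of $\{e,f,g\}$ with the single vertex $e$, since $e$ is adjacent to both $f$ and $g$. Thus if all $k$ attachments are of type (2), iterating Lemma~\ref{lemma:twist} forces $\gamma_c(H_k)\ge \gamma_c(B)+2k$, while the explicit dominating set consisting of a minimum dominating set of $B$ together with one apex vertex per gadget gives $\gamma(H_k)\le \gamma(B)+k$. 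Subtracting, $\gamma_c(H_k)-\gamma(H_k)\ge k+\big(\gamma_c(B)-\gamma(B)\big)\ge k$, which tends to infinity.

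I would carry this out in three steps. First I would record the local accounting for a single gadget: attaching a gadget only adds vertices and edges, so every previously dominated vertex stays dominated, and adding the single apex $e$ dominates exactly the three new vertices, which gives the $\gamma(H_k)\le\gamma(B)+k$ bound directly. Second I would set up the chain so that the case (2) hypothesis is available at every step. Let $T_0$ be a face of $B$ disjoint from some minimum connected dominating set $D_0$ of $B$, and attach gadget $i$ to the triangle $T_{i-1}$ bounding the outer face created by gadget $i-1$, whose apex triangle is $T_i=\{e_i,f_i,g_i\}$. Writing $a,b$ for the two vertices of $T_{i-1}$ with $a\sim e_i,f_i$ and $b\sim f_i,g_i$, the set $D_{i-1}\cup\{a,b\}$ dominates $\{e_i,f_i,g_i\}$ and is connected, because $a\sim b$ and, since $D_{i-1}$ avoids $T_{i-1}$ by construction, $a$ has a neighbor in $D_{i-1}$. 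Hence each partial triangulation $G_i$ has a connected dominating set of size $|D_{i-1}|+2$ that uses two vertices of $T_{i-1}$ and \emph{none} of $T_i$, which both supplies the matching upper bound $\gamma_c(H_k)\le\gamma_c(B)+2k$ and feeds the invariant needed at the next stage.

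The third and hardest step is to promote ``there exists a minimum connected dominating set of $G_i$ avoiding $T_i$'' to ``\emph{no} minimum connected dominating set of $G_i$ meets $T_i$,'' which is precisely the hypothesis Lemma~\ref{lemma:twist}(2) requires in order to force $+2$ rather than $+1$ when gadget $i+1$ is attached to $T_i$. I expect this invariant to be the main obstacle, and I would attack it through the comparatively small closed neighborhoods of the apex vertices: when $e_i,f_i,g_i$ are created each has degree $4$ and a closed neighborhood of size $5$, while the attaching vertices of $T_{i-1}$ are more central, so an exchange argument should show that any apex vertex of $T_i$ appearing in a connected dominating set can be traded for an attaching vertex of $T_{i-1}$ without increasing the size or destroying connectivity or domination. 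Making this exchange argument rigorous in the \emph{final} graph (where interior apex triangles have gained degree from the next gadget) is the crux; once the invariant propagates along the chain, all $k$ attachments are of type (2), the two displayed estimates hold, and letting $k\to\infty$ finishes the proof. An alternative that sidesteps the chain is to attach the $k$ gadgets to $k$ pairwise vertex-disjoint faces of a base large enough to admit a minimum connected dominating set avoiding all of them; then the gadgets interact only locally and the $2k$-versus-$k$ accounting is immediate, at the price of having to exhibit such a base for every $k$.
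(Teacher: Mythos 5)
Your main route (stacking each new gadget on the apex triangle of the previous one) breaks at exactly the step you flag as the crux, and not because the exchange argument is merely hard: the invariant you hope to establish is false. Suppose the first attachment is of type (2), so $\gamma_c(G_1)=\gamma_c(B)+2$. Writing $a\in T_0$ for an attaching vertex with $a\sim e_1$, the set $D_0\cup\{a,e_1\}$ is connected ($a$ has a neighbor in $D_0$, and $a\sim e_1$) and dominating ($e_1$ is adjacent to both $f_1$ and $g_1$), so it is a \emph{minimum} connected dominating set of $G_1$ that meets $T_1$. Hence the universal hypothesis of Lemma~\ref{lemma:twist}(2) already fails when you attach the second gadget. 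Worse, the per-step increment then collapses to $+1$: by the gluing pattern each level-$i$ apex triangle has a vertex $x_i$ adjacent to a vertex $x_{i+1}$ of the next apex triangle, and $x_i$ dominates its whole level, so $D_0\cup\{a\}\cup\{x_1,x_2,\dots,x_k\}$ is a connected dominating set of the chain and $\gamma_c(H_k)\le\gamma_c(B)+k+1$. Your claimed lower bound $\gamma_c(H_k)\ge\gamma_c(B)+2k$ is therefore false, and with it the whole $2k$-versus-$k$ accounting: both of your bounds now grow by about one vertex per gadget, and nothing forces $\gamma_c-\gamma$ to grow at all. (This is consistent with the paper's Proposition~\ref{prop:3kexamples}, where the same tower gains $\gamma_c$ at rate one per added triangle after the first steps.)

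The paper proves the proposition by a different construction that avoids Lemma~\ref{lemma:twist} entirely: it glues $k$ copies of the icosahedron along edges sharing a common vertex $u$, completes to a triangulation, and counts per block, exploiting the gap $\gamma=2$ versus $\gamma_c=4$ of the icosahedron itself; a dominating set needs two vertices per block, one of which may be the shared $u$, giving $\gamma(H_k)=k+1$, while a connected dominating set needs four per block, giving $\gamma_c(H_k)=3k$. Your fallback sketch (gadgets on $k$ pairwise vertex-disjoint faces) is much closer to a correct argument of this per-block type, because there the lower bound can be proved directly, with no appeal to Lemma~\ref{lemma:twist}: the six vertices $W_i=\{a_i,b_i,c_i,e_i,f_i,g_i\}$ of gadget $i$ are disjoint across gadgets, every neighbor of an apex lies in $W_i$, no single attaching vertex dominates all three apexes, and an apex in a connected dominating set drags one of its neighbors (all in $W_i$) in with it; hence $|D\cap W_i|\ge 2$ for every connected dominating set $D$, so $\gamma_c(H_k)\ge 2k$. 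But you still owe the base: you need triangulations $B_k$ with $k$ pairwise vertex-disjoint faces and $\gamma(B_k)=o(k)$ (for instance, an apex vertex joined to every vertex of a suitably triangulated polygon has $\gamma=1$ and about one disjoint face per four polygon vertices). Without exhibiting such a family, and with the chain route unsalvageable, the proposal as written does not prove the proposition.
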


\begin{proof}
Consider $k$ copies, $G_1, G_2, \ldots, G_k$, of  the icosahedron graph presented on the left in Figure \ref{fig:example_icos}.
Label the three vertices on the outer face of  $G_i$ by  $u_i$, $v_i$, and $w_i$. Following the notation in Example \ref{example12}, let $\{a_i,u_i\}$ be a dominating set of vertices for $G_i$ and let $\{a_i, b_i, u_i, v_i\}$  be a connected dominating set of vertices for $G_i$. 
Consider the graph obtained by gluing the edge $(u_1,v_1)$ of $G_1$ to the edge $(u_2, v_2)$ of $G_2$, then $(u_i, w_i)$ of $G_i$ to the edge $(u_{i+1}, v_{i+1})$ of $G_{i+1}$, for $i=2,\ldots, k-1$. In particular, all vertices $u_i$, $i=1,\ldots,k$, are identified to one vertex $u$. The graph $H_k$ is obtained by adding edges within the outside face of this graph until a triangulation is obtained. For $k\ge 3$, these edges can be added in more than one way. See Figure \ref{fig:example_icos2} for one example.

The set $\{u, a_1, a_2, \ldots, a_k\}$ is a dominating set for $H_k$. Since  $\gamma(G_i)=2$, for $i=1,
\ldots, k$, a dominating set for $H_k$ necessarily contains two vertices of each $G_i$. The set $\{u_i, v_i, w_i\}$ is not a dominating set for $G_i$, thus a dominating set of $H_k$ necessarily contains a vertex in $V(G_i) \setminus \{u_i, v_i, w_i\}$, for each $i=1,2,\ldots, k$.  This shows that the set $\{u, a_1, a_2, \ldots, a_k\}$ is a smallest dominating for $H_k$ and $\gamma(H_k)=k+1.$ 

The set $S=\{u, a_1, b_1, v_1=v_2, a_2, b_2, w_2=v_3, \ldots,a_{k-1}, b_{k-1}, w_{k-1}=v_k,a_k, b_k\}$ is a connected dominating set for $H_k$. A connected dominating set for $H_k$ necessarily contains four vertices of each $G_i$. Since at least two of these vertices are not in the set $\{u_i, v_i, w_i\}$, the set $S$ is a smallest connected dominating set for $H_k$ and $\gamma_c(H_k) = 3k$. The difference $\gamma_c(H_k)-\gamma(H_k)=3k-(k+1)=2k-1$ grows infinitely large with $k$.

\end{proof}

\begin{figure}[h]
\centering
\includegraphics[width=1\textwidth]{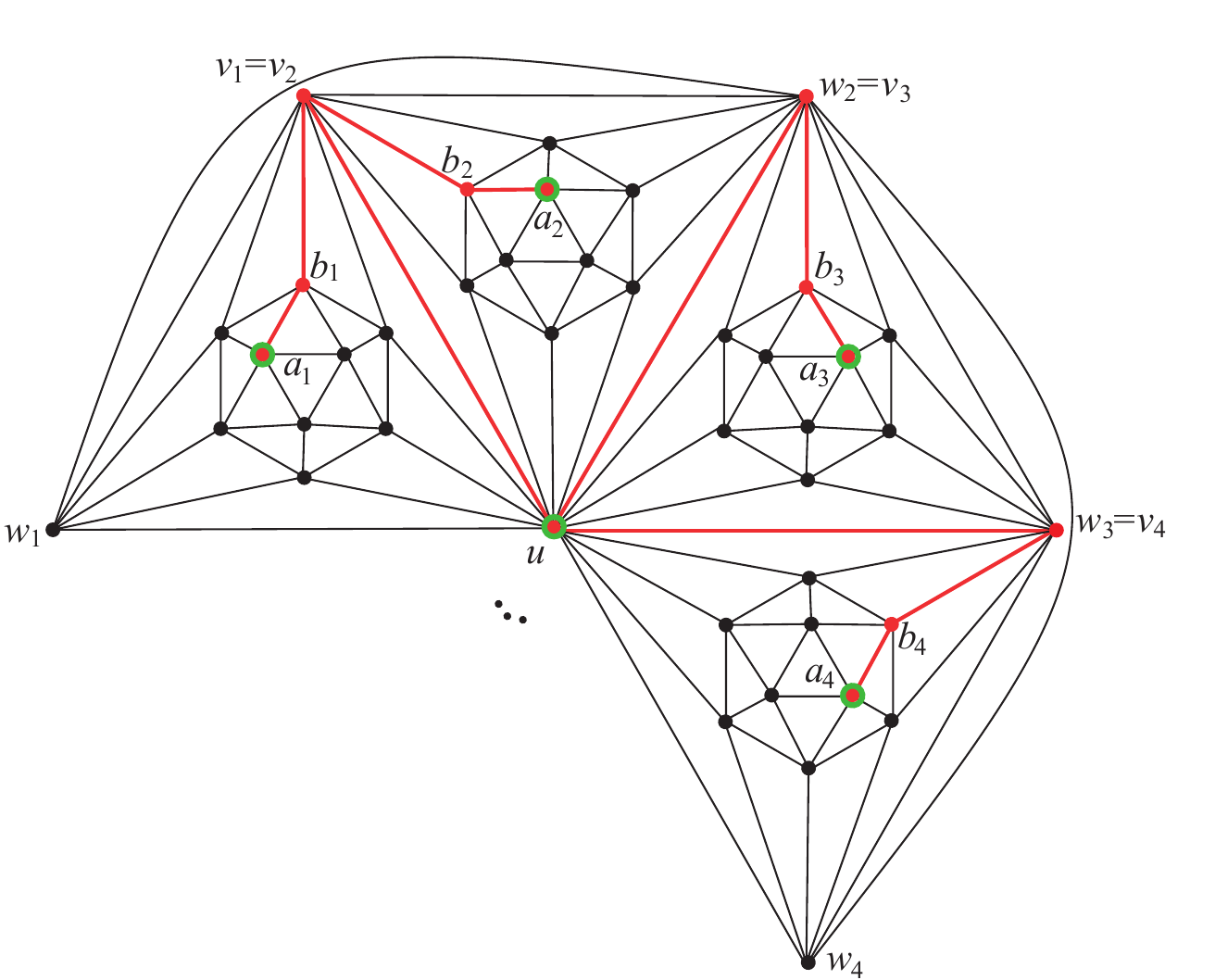}
\caption{\label{fig:example_icos2} A family of graphs for which $\gamma_c(H_k)-\gamma(H_k)$ grows arbitrarily large.  }

\end{figure}

\begin{remark}
    Using the same gluing pattern, the second graph with 12 vertices presented in Example \ref{example12} can be used to construct another family of graphs $\{J_k\}_k$. This family will have the same domination numbers and connected domination numbers as the $\{H_k\}_k$ family. 
\end{remark}
\pagebreak
\section{acknowledgements}
This research was supported in part by an MAA Tensor Grant for Women and Mathematics, together with the Summer Undergraduate Research Fellowship from the University of South Alabama, and the Alabama Space Grant Consortium.


\end{document}